\DeclareMathOperator*{\innt}{\ThisStyle{\vstretch{0.9}{\hstretch{1.5}{\rotatebox{10}{$\SavedStyle\hspace{-0.5pt}\!\int\!\hspace{-0.5pt}$}}}}}
\DeclareMathOperator*{\Der}{\ThisStyle{\hstretch{1.2}{\rotatebox{0}{$\SavedStyle\delta^r$}}}}
\newcommand{\NN} {\mathbb{N}}
\newcommand{\RR} {\mathbb{R}}
\newcommand{\lip}  {\mathrm{lip}}
\newcommand{\uu} {\mathfrak{u}}
\newcommand{\mm} {\mathfrak{m}}
\newcommand{\oo} {\mathfrak{o}}
\newcommand{\pp} {\mathfrak{p}}
\newcommand{\qq} {\mathfrak{q}}
\newcommand{\mmm} {\mm}%{{\symb}\mm}
\newcommand{\ppp} {\pp}%{{\symb}\pp}
\newcommand{\qqq} {\qq}%{{\symb}\qq}
\newcommand{\SEM} {\mathfrak{P}}
\newcommand{\mult}  {\mathrm{m}}
\newcommand{\DIDE} {\mathfrak{D}}
\newcommand{\EV} {\mathrm{Evol}}%{\mathrm{Evo}}
\newcommand{\EVE} {{\mathrm{evol}}}
\newcommand{\chart} {\Xi}
\newcommand{\RT} {\mathrm{R}}
\newcommand{\Ad} {\mathrm{Ad}}
\newcommand{\DP} {\DIDE\mathrm{P}}
\newcommand{\conj}  {\mathbf{c}}
\newcommand{\dd} {\mathrm{d}}
\newcommand{\im} {\mathrm{im}}
\newcommand{\dom} {\mathrm{dom}}
\newcommand{\U}  {\mathcal{U}}
\newcommand{\V}  {\mathcal{V}}
\newcommand{\mg} {\mathfrak{g}}
\newcommand{\cp} {\circ}
\newcommand{\COMP} {\mathfrak{K}}
\newcommand{\compact} {\mathrm{C}}
\newcommand{\compacto} {\mathrm{K}}
\newcommand{\he} {\hspace{1pt}}
\renewcommand{\theenumi}{\arabic{enumi})} 
\renewcommand{\labelenumi}{\theenumi}
\let\origenumerate\enumerate
\def\enumerate{\origenumerate\itemsep0pt}
\let\origitemize\itemize
\def\itemize{\origitemize\itemsep0pt}
\newtheorem{theorem}{Theorem}
\newtheorem{proposition}{Proposition}
\newtheorem{lemma}{Lemma}
\newtheorem{corollary}{Corollary}
\newtheorem{remark}{Remark}
\def\blfootnote{\gdef\@thefnmark{}\@footnotetext}
\begin{document}
\title{The Strong Trotter Property for\\ Locally $\mu$-convex Lie Groups}
\author{
  \textbf{Maximilian Hanusch}\thanks{\texttt{mhanusch@math.upb.de}}
  \\[1cm]
  Institut f\"ur Mathematik \\
  Lehrstuhl f\"ur Mathematik X \\
  Universit\"at W\"urzburg \\
  Campus Hubland Nord \\
  Emil-Fischer-Stra\ss e 31 \\
  97074 W\"urzburg \\
  Germany
}
\date{February 19, 2020}
%\date{\today}
\maketitle

\begin{abstract}  
We show that an infinite dimensional Lie group in Milnor's sense has the strong Trotter property if it is locally $\mu$-convex. This is a continuity condition imposed on the Lie group multiplication that generalizes the triangle inequality for locally convex vector spaces, and is equivalent to $C^0$-continuity of the evolution map on its domain. In particular, the result proven in this paper significantly extends the respective result obtained by Gl\"ockner in the context of measurable regularity.
\end{abstract}

%\tableofcontents 
\section{Introduction}
Let $G$ be an infinite dimensional Lie group in Milnor's sense, with exponential map $\exp\colon \mg\supseteq \dom[\exp]\rightarrow G$. We say that $G$ has the  
\emph{strong Trotter property} (cf.\ \cite{HGM}) if for each $\mu\in C^1([0,1],G)$ with $\mu(0)=e$ and $\dot\mu(0)\in \dom[\exp]$, we have 
\begin{align}
\label{sddssddszz}
	\textstyle\lim_n \mu(\tau/n)^n=\exp(\tau\cdot \dot\mu(0))\qquad\quad\forall\: \tau\in [0,\ell]
\end{align} 
uniformly for each $\ell>0$. As already shown in \cite{HGM}, this implies\footnote{Although in \cite{HGM} $\dom[\exp]=\mg$ is presumed, the proofs of the mentioned implications just carry over to the situation considered in this paper --   provided, of course, that the definitions given in \cite{HGM} for the (strong) commutator-, and the Trotter property are adapted in the obvious way.} the \emph{strong commutator property}, and also the  \emph{Trotter-}, and the \emph{commutator property} that are relevant, e.g., in representation theory of infinite dimensional Lie groups \cite{N1}. 
More importantly, Theorem I in \cite{HGM} states that $G$ has the strong Trotter property if it is $R$-regular. 
Now, $R$-regularity implies $C^0$-continuity of the evolution map, so that Theorem 1 in \cite{RGM} shows that $G$ is \emph{locally $\mu$-convex}. This condition has originally been introduced in \cite{HGGG}, and states that to each continuous seminorm $\uu$ on the modeling space $E$ of $G$, and to each chart $\chart\colon G\supseteq \U\rightarrow \V\subseteq E$ of $G$ around $e$ with $\chart(e)=0$, there exists a continuous seminorm $\uu\leq \oo$ on $E$, such that  $\chart^{-1}(X_1)\cdot {\dots}\cdot \chart^{-1}(X_n)\in \U$ and  
\begin{align}
\label{opdspoopdpod}
	(\uu\cp\chart)\big(\chart^{-1}(X_1)\cdot {\dots}\cdot \chart^{-1}(X_n)\big)\leq \oo(X_1)+{\dots}+\oo(X_n)
\end{align} 
holds for all $X_1,\dots,X_n\in E$ with $\oo(X_1)+{\dots}+\oo(X_n) \leq 1$. 
Evidently, this condition generalizes the triangle inequality for locally convex vector spaces; and, in general (without regularity presumptions on $G$) is equivalent to $C^0$-continuity of the evolution map on its domain, cf.\ Theorem 1 in \cite{RGM}. 
In this paper, we show that local $\mu$-convexity already suffices to ensure validity of \eqref{sddssddszz}, i.e., that we have the following theorem.
\begin{theorem}
\label{podspopods}
If $G$ is locally $\mu$-convex, then $G$ has  the strong Trotter property.
\end{theorem}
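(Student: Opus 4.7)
The plan is to realize each product $\mu(\tau/n)^n$ as the left evolution of a smooth curve in $E$ whose $C^0$-distance to the constant curve $X:=\dot\mu(0)$ tends to $0$ uniformly in $\tau\in[0,\ell]$. Since $\exp(\tau X)$ is the value at time $\tau$ of the evolution of the constant curve $X$, the $C^0$-continuity of $\EV$ on its domain -- equivalent by Theorem~1 in \cite{RGM} to local $\mu$-convexity -- will then force $\mu(\tau/n)^n\to\exp(\tau X)$ uniformly in $\tau\in[0,\ell]$.

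The natural candidate is the concatenation $\nu_n\colon[0,\tau]\to G$ with $\nu_n(s):=\mu(\tau/n)^k\cdot\mu(s-k\tau/n)$ for $s\in[k\tau/n,(k+1)\tau/n]$ and $k=0,\dots,n-1$. It is continuous, satisfies $\nu_n(\tau)=\mu(\tau/n)^n$, and on each cell its left logarithmic derivative is $s\mapsto \mu(s-k\tau/n)^{-1}\dot\mu(s-k\tau/n)$, which converges uniformly to $\dot\mu(0)=X$ on $[0,\tau/n]$ as $n\to\infty$ by continuity at~$0$. Unfortunately $\nu_n$ is only piecewise $C^1$: at an interior breakpoint $s=k\tau/n$ the left logarithmic derivative jumps from $\mu(\tau/n)^{-1}\dot\mu(\tau/n)$ to $X$, so $\nu_n$ does not belong to the (smooth) domain of $\EV$.

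I would repair this by replacing $\mu|_{[0,\tau/n]}$ on each cell by a smooth $\hat\mu_n\colon[0,\tau/n]\to G$ with $\hat\mu_n(0)=e$, $\hat\mu_n(\tau/n)=\mu(\tau/n)$, and whose left logarithmic derivative, together with all of its higher derivatives, matches at the two endpoints -- so that the $n$-fold concatenation $\hat\nu_n$ is smooth. A concrete construction works in the chart $\chart$ at $e$: set $\hat\mu_n(t):=\chart^{-1}(h_n(t))$ with $h_n\colon[0,\tau/n]\to E$ a smooth Hermite-type interpolant prescribed at both endpoints by the jet data dictated by the matching condition and by $h_n(\tau/n)=\chart(\mu(\tau/n))$. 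Since $\chart(\mu(\tau/n))=(\tau/n)X+o(\tau/n)$ uniformly for $\tau\in[0,\ell]$ and the required correction jets are of the same order, standard chart-to-group estimates yield that the left logarithmic derivative of $\hat\nu_n$ converges to the constant curve $X$ in $C^0$, uniformly in $\tau\in[0,\ell]$.

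The main hurdle is this smoothing step: producing $\hat\mu_n$ that simultaneously hits the prescribed endpoint $\mu(\tau/n)$, matches jets to all orders at both boundary points, and has left logarithmic derivative uniformly close to $X$, with the estimates joint in $\tau$. Once $\hat\nu_n$ is in hand, the conclusion is immediate: by Theorem~1 in \cite{RGM}, $\EV$ is $C^0$-continuous on its domain, so $\mu(\tau/n)^n=\hat\nu_n(\tau)=\EV(c_n)(\tau)\to\exp(\tau X)$ uniformly on $[0,\ell]$, where $c_n$ denotes the left logarithmic derivative of $\hat\nu_n$. This establishes the strong Trotter property.
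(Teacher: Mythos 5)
Your overall strategy is the same as the paper's: write $\mu(\tau/n)^n$ as the evolution at time $\tau$ of the $n$-fold concatenation of the rescaled piece of $\mu$, and compare that concatenation with the constant curve $X$. But the step you yourself flag as ``the main hurdle'' is precisely the whole difficulty, and it is not carried out. To stay inside the domain of the evolution map you must produce, for every $n$ and \emph{every} $\tau\in[0,\ell]$ simultaneously, a curve $\hat\mu_n$ hitting $\mu(\tau/n)$ exactly, whose logarithmic-derivative jets match at both endpoints, and whose logarithmic derivative is close to $X$ with estimates uniform in $\tau$, in a modeling space that is merely locally convex (no norm, no completeness, $\mu$ only $C^1$). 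The claim that ``standard chart-to-group estimates'' give this is an assertion, not a proof; one has to control $\dermapdiff(h_n,\dot h_n)$ for the interpolant $h_n$, and the jet-matching conditions at the junctions are nonlinear conditions involving $\dermapdiff$, not conditions one can impose freely on a Hermite interpolant. The paper avoids this entirely: it extends the product integral to the piecewise-integrable class $\DP^0(\COMP,\mg)$, so the raw concatenation $\phi_{\tau,n}$ (your $\nu_n$, with its derivative jumps) is already admissible, and no smoothing is needed; the comparison with $\tau\cdot\phi_X$ is then done by the quantitative $L^1$-estimates of Proposition \ref{aaapofdpofdpofdpofd} and Lemma \ref{podspodspods}.

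A second gap is the uniformity in $\tau$. You invoke only the qualitative statement that $\EV$ is $C^0$-continuous on its domain; continuity at each fixed constant curve $\tau\cdot\phi_X$ yields $\mu(\tau/n)^n\to\exp(\tau X)$ for each single $\tau$, but the strong Trotter property requires convergence uniform on $[0,\ell]$. To extract uniformity you would at least need a uniform-continuity argument over the compact family $\{\tau\cdot\phi_X:\tau\in[0,\ell]\}$ (after reparametrizing all curves to a common interval), which your write-up does not address. The paper obtains exactly this uniformity from Corollary \ref{podspodspodsdd}: a seminorm estimate valid simultaneously for all $\tau\in[0,\ell]$, whose proof needs the compactness of the image of $(\tau,t)\mapsto\exp(t\tau X)$ (Lemma \ref{poopo}) together with the $\Ad$-twisted comparison identity \eqref{dfdssfdsfd}. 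So the proposal is not wrong in spirit, but as it stands both the smoothing construction and the uniformity in $\tau$ remain unproven, and these are the two points the paper's machinery ($\DP^0$, Proposition \ref{aaapofdpofdpofdpofd}, Lemma \ref{podspodspods}, Corollary \ref{podspodspodsdd}) was built to settle.
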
 
In particular, this drops the presumptions made in \cite{HGM} on the domain of the evolution map, as well as 
the completeness presumptions made in \cite{HGM} on $\mg$.

\section{Preliminaries}
\label{dsdssd}
In this section, we fix the notations, and discuss the properties of the product integral (evolution map) that we shall need in Sect.\ \ref{dskdskjkjdskjdsds} to prove Theorem \ref{podspopods}. The proofs of the facts mentioned but not verified in this section can be found, e.g., in Sect.\ 3 and Sect.\ 4 in \cite{RGM}.

\subsection{Lie Groups}
In the following, $G$ will denote an infinite dimensional Lie group in Milnor's sense \cite{HG,HA,MIL,KHN} that is modeled over the Hausdorff locally convex vector space $E$, with corresponding system of continuous seminorms $\SEM$. 
We denote the Lie algebra of $G$ by $\mg$, fix a chart 
\begin{align*}
	\chart\colon G\supseteq \U\rightarrow \V\subseteq E
\end{align*}
with $\V$ convex, $e\in \U$, and $\chart(e)=0$; 
 and identify $\mg\cong E$ via $\dd_e\chart\colon \mg\rightarrow E$. Specifically, the latter condition means that we will write $\ppp(X)$ instead of $(\pp\cp\dd_e\chart)(X)$ for each $\pp\in \SEM$ and $X\in \mg$ in the following. We let $\mult\colon G\times G\rightarrow G$ denote the Lie group multiplication, $\RT_g:=\mult(\cdot, g)$ the right translation by $g\in G$, and $\Ad\colon G\times \mg\rightarrow \mg$ the adjoint action, i.e., we have
\begin{align*}
	\Ad_g(X):= \Ad(g,X) :=\dd_e\conj_g(X)\qquad\quad\text{with}\qquad\quad \conj_g\colon G\ni h\mapsto g\cdot  h\cdot g^{-1}\in G
\end{align*}
for each $g\in G$ and $X\in  \mg$.

\subsection{The Product Integral}
Let $\COMP:=\{[r,r']\subseteq \RR\: |\: r<r'\}$ denote the set of all proper compact intervals in $\RR$. 
The \emph{right logarithmic derivative} is given by
\begin{align*}
	\Der\colon C^1([r,r'],G)\rightarrow C^0([r,r'],\mg),\qquad \mu\mapsto \dd_\mu\RT_{\mu^{-1}}(\dot \mu)\qquad\qquad\forall\: [r,r']\in \COMP.
\end{align*}
We let 
$\DIDE:= \bigsqcup_{[r,r']\in \COMP}\DIDE_{[r,r']}$ with   
$\DIDE_{[r,r']}:= \Der(C^1([r,r'],G))$ for each $[r,r']\in \COMP$, and define 
\begin{align*}
	C_*^{1}([r,r'],G):=\{\mu\in C^{1}([r,r'],G)\:|\: \mu(r)=e\}
\end{align*}
as well as
\begin{align*}
	\EV\colon \DIDE_{[r,r']}\rightarrow C_*^{1}([r,r'],G),\qquad\Der(\mu)\mapsto \mu\cdot \mu(r)^{-1}\qquad\qquad\forall\: [r,r']\in \COMP.
\end{align*}  
The \emph{product integral} is given by
\begin{align*}
	\textstyle\innt_s^t\phi:= \EV\big(\phi|_{[s,t]}\big)(t)\in G\qquad\quad \forall \:[s,t]\subseteq \dom[\phi],\:\: \phi\in \DIDE,
\end{align*}
and we let $\innt\phi:=\innt_r^{r'}\!\phi$ for $\phi\in \DIDE$ with $\dom[\phi]=[r,r']$.
\begin{remark}
Evidently, for $[r,r']=[0,1]$, $\innt \phi$ just equals the ``small evolution map'', usually denoted by $\EVE$ in the literature. Moreover, $\innt \phi$ equals the Riemann integral for the case that $(G,\cdot)=(F,+)$ is the additive group of a Hausdorff locally convex vectors space $F$ -- The formulas \ref{kdsasaasassaas}--\ref{subst} below then just generalize the respective formulas for the Riemann integral.\hspace*{\fill}$\ddagger$
\end{remark}
\noindent
We have the following elementary identities:
\vspace{2pt}
\begingroup
\setlength{\leftmargini}{17pt}
{
\renewcommand{\theenumi}{\emph{\alph{enumi})}} 
\renewcommand{\labelenumi}{\theenumi}
\begin{enumerate}
\item
\label{kdsasaasassaas}
	\hspace{3pt}$\textstyle\innt_r^t \phi \cdot \innt_r^t\psi=\innt_r^t \phi+\Ad_{\innt_r^\bullet\phi}(\psi)\qquad\quad\quad\hspace{25.4pt}\forall\: \phi,\psi\in \DIDE_{[r,r']},\:\: t\in [r,r']$.
	\vspace{4pt}
\item
\label{kdskdsdkdslkds}
	\hspace{1pt}$\textstyle\big[\!\innt_r^t \phi\big]^{-1} \big[\innt_r^t\psi\big]=\innt_r^t\Ad_{[\innt_r^\bullet\phi]^{-1}}(\psi-\phi)\qquad\quad\forall\: \phi,\psi\in \DIDE_{[r,r']},\:\: t\in [r,r']$.
	\vspace{4pt}
\item
\label{pogfpogf}
\hspace{4pt}For $r=t_0<{\dots}<t_n=r'$ and $\phi\in \DIDE_{[r,r']}$, we have 
	\begin{align*}
		\textstyle\innt_r^t\phi=\innt_{t_{p}}^t\! \phi\cdot \innt_{t_{p-1}}^{t_{p}} \!\phi \cdot {\dots} \cdot \innt_{r}^{t_1}\!\phi\qquad\quad\forall\:t\in (t_p,t_{p+1}],\:\: p=0,\dots,n-1.
	\end{align*}
		\vspace{-15pt}
\item
\label{subst}
	\hspace{4pt}For $\varrho\colon [\ell,\ell']\rightarrow [r,r']$ 
of class $C^1$, we have
\begin{align*}
	 \textstyle\innt_r^{\varrho(\bullet)}\phi=\big[\innt_\ell^\bullet\dot\varrho\cdot (\phi\cp\varrho)\he\big]\cdot \big[\innt_r^{\varrho(\ell)}\phi\he\big]\qquad\quad\forall\:\phi\in \DIDE_{[r,r']}.
\end{align*} 
\end{enumerate}}
\endgroup
\noindent	
Next, for $X\in \mg$, we write $\phi_X$ for the constant map $[0,1]\ni t\mapsto X\in \mg$. If $\phi_X\in \DIDE_{[0,1]}$ holds, we define  
$\exp(X):= \innt \phi_X$. Evidently, we have $0\in \dom[\exp]$, and it is straightforward from \ref{subst} that, cf.\ Appendix \ref{App1} 
\begin{align}
\label{spodpodspodspods}
	X\in \dom[\exp]\qquad\quad\Longrightarrow\qquad\quad \RR_{\geq 0}\cdot X\subseteq \dom[\exp].
\end{align}
Finally, we let $\DP^0(\COMP,\mg):= \bigsqcup_{[r,r']\in \COMP}\DP^0([r,r'],\mg)$; where $\DP^0([r,r'],\mg)$ (for $[r,r']\in \COMP$) denotes the set of all maps $\phi\colon [r,r']\rightarrow \mg$ such that there exist $r=t_0<{\dots}<t_n=r'$, and $\phi[p]\in \DIDE_{[t_p,t_{p+1}]}$ with
\begin{align*}
	\phi|_{(t_p,t_{p+1})}=\phi[p]|_{(t_p,t_{p+1})}\qquad\quad\forall\: p=0,\dots,n-1.
\end{align*}  
In this situation, we define $\innt_r^r\phi:=e$, and let 
\begin{align}
\label{pofdofdopfd}
	\textstyle\innt_r^t\phi&\textstyle:=\innt_{t_{p}}^t \phi[p] \cdot \innt_{t_{p-1}}^{t_p} \phi[p-1]\cdot {\dots} \cdot \innt_{r}^{t_1}\phi[0]\qquad\quad \forall\: t\in (t_{p}, t_{p+1}],\:\: p=0,\dots,n-1.
\end{align} 
A standard refinement argument in combination with \ref{pogfpogf} then shows that this is well defined (cf.\ Sect.\ 4.3 in \cite{RGM}), i.e., independent of any choices we have made. 
It is furthermore not hard to see that for $\phi,\psi\in \DP^0([r,r'],\mg)$, we have 
 $\Ad_{[\innt_r^\bullet\phi]^{-1}}(\psi -\phi)\in \DP^0([r,r'],\mg)$ with 
\begin{align}
\label{dfdssfdsfd}
	\textstyle\big[\innt_r^t \phi\big]^{-1}\big[\innt_r^t \psi\big]=\innt_r^t \Ad_{[\innt_r^\bullet\phi]^{-1}}(\psi -\phi)
	\qquad\quad\forall\: t\in [r,r'].
\end{align} 
\subsection{Some Estimates}
We recall several facts (cf.\ Sect.\ 3.4.1 in \cite{RGM}).
\begingroup
\setlength{\leftmargini}{17pt}
{
\renewcommand{\theenumi}{\roman{enumi})} 
\renewcommand{\labelenumi}{\theenumi}
\begin{enumerate}
\item
\label{as1}
For each compact $\compact\subseteq G$, and each $\qq\in \SEM$, there exists some $\qq\leq \mm\in \SEM$, as well as $O\subseteq G$ open with $\compact\subseteq O$, such that
\begin{align*}
	\qqq\cp \Ad_g\leq \mmm\qquad\quad\forall\: g\in O.
\end{align*} 
\item
\label{as2}
Assume we are given $\mu\in C^1([0,1],G)$, as well as   
$\ell>0$ and $m\geq 1$ with $[0,\ell]\cdot [0,1/m]\subseteq [0,1]$. For $\tau\in [0,\ell]$, we define $\mu_{\tau}\colon [0,1/m]\ni t\mapsto \mu(\tau\cdot t)\in G$. The chain rule yields $\Der(\mu_\tau)(t)=\tau\cdot\Der(\mu)(\tau\cdot t)$ for each $t\in [0,1/m]$, so that the map
\begin{align}
\label{fdpopofdpofd}
	\alpha\colon [0,\ell]\times [0,1/m]\rightarrow \mg,\qquad (\tau,t)\mapsto \Der(\mu_\tau)(t)
\end{align}
is continuous.
\end{enumerate}}
\endgroup
\noindent
We say that $C^0([0,\ell],G)\supseteq\{\mu_n\}_{n\in \NN}\rightarrow \mu\in C^0([0,\ell],G)$ converges uniformly for $\ell>0$ if to each neighbourhood $U\subseteq G$ of $e$, there exists some $n_U\in\NN$ with
\begin{align*}
	\mu_n(t)\in\: U\cdot \mu(t)\: \cap\: \mu(t)\cdot U\qquad \quad \forall\: n\geq n_U,\:\: t\in [0,\ell].
\end{align*}
It is straightforward to see that
\begin{lemma}
\label{fdfddfd}
A sequence $C^0([0,\ell],G)\supseteq\{\mu_n\}_{n\in \NN}\rightarrow \mu\in C^0([0,\ell],G)$ converges uniformly if and only if to each neighbourhood $V\subseteq G$ of $e$, there exists some $n_V\in\NN$ with
\begin{align*}
	\mu_n(t)\in \mu(t)\cdot V\qquad \quad \forall\: n\geq n_V,\:\: t\in [0,\ell].
\end{align*} 
\end{lemma}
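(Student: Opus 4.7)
The ``only if'' direction is immediate: given a neighborhood $V$ of $e$, take $U := V$ in the definition of uniform convergence; then the intersection condition in particular yields $\mu_n(t)\in \mu(t)\cdot V$ for $n\geq n_U$, $t\in [0,\ell]$.

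For the ``if'' direction, fix a neighborhood $U\subseteq G$ of $e$. The plan is to produce a single neighborhood $V$ of $e$ which is small enough to handle both the right- and the left-sided condition simultaneously. Since $\mu\in C^0([0,\ell],G)$ and $[0,\ell]$ is compact, the image $\compact:=\im[\mu]\subseteq G$ is compact. Now the conjugation map $\conj\colon G\times G\rightarrow G$, $(g,h)\mapsto g\cdot h\cdot g^{-1}$, is continuous with $\conj(g,e)=e\in U$ for each $g\in \compact$. A standard tube-lemma argument (continuity of $\conj$ on the compact set $\compact\times\{e\}$ together with compactness of $\compact$) then yields an open neighborhood $V\subseteq U$ of $e$ with
\begin{align*}
g\cdot V\cdot g^{-1}\subseteq U\qquad\quad\forall\: g\in \compact.
\end{align*}

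Applying the hypothesis to this $V$, we obtain $n_V\in\NN$ such that $\mu_n(t)\in \mu(t)\cdot V$ for all $n\geq n_V$ and all $t\in [0,\ell]$. Since $V\subseteq U$, this already gives $\mu_n(t)\in \mu(t)\cdot U$. For the left-sided inclusion, write $\mu_n(t)=\mu(t)\cdot v_{n,t}$ with $v_{n,t}\in V$; then
\begin{align*}
\mu_n(t)\cdot\mu(t)^{-1}=\mu(t)\cdot v_{n,t}\cdot\mu(t)^{-1}\in \conj(\compact\times V)\subseteq U,
\end{align*}
so that $\mu_n(t)\in U\cdot\mu(t)$ as well. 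Thus $\mu_n(t)\in U\cdot\mu(t)\cap \mu(t)\cdot U$ for all $n\geq n_V$ and $t\in [0,\ell]$, proving uniform convergence in the original sense.

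The only genuinely nontrivial ingredient is the uniform tube-lemma step producing $V$ with $\conj(\compact\times V)\subseteq U$; everything else is bookkeeping. I do not expect any obstacle beyond citing (or briefly invoking) the continuity of $\conj$ on $G\times G$, which is a standing property of the Lie group.
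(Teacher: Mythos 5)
Your proof is correct and follows essentially the same route as the paper: both reduce the two-sided condition to the one-sided one by producing, via continuity of conjugation and a compactness argument, a single neighbourhood $V\subseteq U$ of $e$ with $\mu(t)\cdot V\cdot\mu(t)^{-1}\subseteq U$ for all $t\in[0,\ell]$. The only cosmetic difference is that you run the compactness argument over the compact image $\im[\mu]$, whereas the paper parametrizes the conjugation by $t$ (using an extension of $\mu$ to an open interval) and uses compactness of $[0,\ell]$; the two are interchangeable.
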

\begin{proof}
	The proof is elementary, and can be found in Appendix \ref{App2}.  
\end{proof}
\subsection{Continuity of the Integral}
\label{sssxyxyaaaayx}
As already mentioned in the introduction, Theorem 1 in \cite{RGM} shows that\footnote{Here, one additionally has to apply Lemma 15 in \cite{RGM}, together with the observation that this lemma also holds if the interval $[0,1]$ is replaced by some fixed arbitrary interval $[r,r']\in \COMP$ there.}   
local $\mu$-convexity \eqref{opdspoopdpod} is equivalent to continuity of the product integral on $\DIDE\cap C^k([r,r'],\mg)$ for any $k\in \NN\sqcup\{\lip,\infty\}$, and $[r,r']\in \COMP$ w.r.t.\ the $C^0$-topology, i.e., w.r.t.\ the seminorms 
\begin{align}
\label{kfdkfdkjbcbcbc}
	\textstyle\ppp_\infty(\phi):=\sup_{t\in [r,r']}\ppp(\phi(t))\qquad\quad\forall\: \phi\in \DIDE\cap C^k([r,r'],\mg)
\end{align}
for $\pp\in \SEM$. 
It was furthermore shown in \cite{RGM} that local $\mu$-convexity implies that the product integral is continuous at zero on $\DP^0(\COMP,\mg)$ w.r.t.\ the $L^1$-topology, i.e., that the following proposition holds, cf.\ Proposition 2 in \cite{RGM}.
\begin{proposition}
\label{aaapofdpofdpofdpofd}
Assume that $G$ is locally $\mu$-convex. Then, to each $\pp\in \SEM$, there exists some $\pp\leq \qq\in \SEM$, such that for each $\phi\in \DP^0(\COMP,\mg)$ we have
\begin{align*}
	\textstyle\int_r^{r'}\qqq(\phi(s))\:\dd s \leq 1
	\qquad\quad \Longrightarrow\qquad\quad	\textstyle(\pp\cp\chart)\big(\innt_r^\bullet\phi\big)\leq \int_r^\bullet \qqq(\phi(s))\:\dd s 
\end{align*} 
with $r,r'\in \RR$ such that $\dom[\phi]=[r,r']$.
\end{proposition}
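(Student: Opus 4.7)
The plan is to approximate $\innt_r^t \phi$ by a product $\chart^{-1}(X_{N-1}) \cdot \ldots \cdot \chart^{-1}(X_0)$ along a fine partition of $[r,t]$, apply local $\mu$-convexity \eqref{opdspoopdpod} to each such finite product, and pass to the limit. First, given $\pp \in \SEM$, apply \eqref{opdspoopdpod} with $\uu = \pp$ to obtain $\qq_0 \geq \pp$ in $\SEM$, and set $\qq := 2\qq_0$; the factor $2$ provides slack to absorb first-order errors arising in the approximation below. For $\phi \in \DP^0([r,r'], \mg)$ with $\int_r^{r'} \qq(\phi(s))\,\dd s \leq 1$ (equivalently $\int_r^{r'} \qq_0(\phi(s))\,\dd s \leq 1/2$) and $t \in [r,r']$, choose a refinement $r = s_0 < \ldots < s_N = t$ of the piecewise partition underlying $\phi$; on $[s_i, s_{i+1}]$, contained in one continuity piece $[t_{k(i)}, t_{k(i)+1}]$, set $X_i := \chart(\innt_{s_i}^{s_{i+1}} \phi[k(i)])$. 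For mesh small enough each small evolution lies in $\U$, and by \ref{pogfpogf},
\begin{align*}
\innt_r^t \phi = \chart^{-1}(X_{N-1}) \cdot \ldots \cdot \chart^{-1}(X_0).
\end{align*}

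The main technical step is a Riemann-type comparison: for every $\epsilon > 0$, once the mesh is sufficiently small,
\begin{align*}
\qq_0(X_i) \leq (1 + \epsilon) \int_{s_i}^{s_{i+1}} \qq_0(\phi(u))\,\dd u \qquad \forall\: i.
\end{align*}
Setting $\mu_i(u) := \innt_{s_i}^u \phi[k(i)]$, the identity in \ref{as2} together with inversion of $\dermapdiff$ in its second argument -- yielding a map $\dermapinvdiff$ smooth in its first argument with $\dermapinvdiff(0, X) = X$ -- delivers
\begin{align*}
X_i = \int_{s_i}^{s_{i+1}} \dermapinvdiff\bigl((\chart\cp\mu_i)(u), \phi(u)\bigr)\,\dd u.
\end{align*}
The required bound then reduces to an estimate $\qq_0(\dermapinvdiff(x, X) - X) \leq \epsilon\he\qq_0(X)$ that holds uniformly for $x$ near $0$ and $X$ in a bounded set -- a consequence of smoothness of $\dermapinvdiff$ at $x = 0$ and uniform boundedness of each $\phi[k]$ (continuous on a compact interval). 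Fine mesh forces $(\chart\cp\mu_i)(u)$ to stay close to $0$ by $C^0$-continuity of $\EV$, and the claim follows.

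Summing the comparison gives $\sum_i \qq_0(X_i) \leq (1+\epsilon)/2 \leq 1$ for $\epsilon \leq 1$; then \eqref{opdspoopdpod} yields
\begin{align*}
(\pp\cp\chart)\bigl(\innt_r^t \phi\bigr) \leq \sum_i \qq_0(X_i) \leq (1+\epsilon) \int_r^t \qq_0(\phi(u))\,\dd u.
\end{align*}
Since the left-hand side is partition-independent, letting $\epsilon \to 0$ via refinement gives $(\pp\cp\chart)(\innt_r^t \phi) \leq \int_r^t \qq_0(\phi(u))\,\dd u \leq \int_r^t \qq(\phi(u))\,\dd u$, as required. The hard part is the Riemann-type comparison: obtaining error estimates that are uniform across the partition and across the pieces of $\phi$, which rests on the smoothness of $\dermapinvdiff$ together with $C^0$-continuity of the product integral (itself ensured by local $\mu$-convexity, so we are not circular).
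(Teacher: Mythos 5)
A preliminary remark on the comparison: the paper does not prove Proposition \ref{aaapofdpofdpofdpofd} internally at all --- it is quoted from Proposition 2 in \cite{RGM} --- so your argument has to be judged on its own terms. Its skeleton is the natural direct attack and is sound: refine a partition of $[r,t]$ subordinate to the pieces of $\phi$, write $\innt_r^t\phi=\chartinv(X_{N-1})\cdot{\dots}\cdot\chartinv(X_0)$ via \ref{pogfpogf}, feed the $X_i$ into \eqref{opdspoopdpod}, and pass to the limit; also the representation $X_i=\int_{s_i}^{s_{i+1}}\dermapinvdiff\big((\chart\cp\mu_i)(u),\phi(u)\big)\,\dd u$ obtained from \ref{as2} and the fundamental theorem of calculus is fine. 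The genuine gap is exactly the step you single out as the hard part: the claimed estimate $\qq_0(\dermapinvdiff(x,X)-X)\leq\epsilon\he\qq_0(X)$, uniformly for $x$ near $0$ and $X$ in a bounded set. Since $\dermapinvdiff(x,\cdot)-\id$ is linear in $X$, this is an operator-norm statement relative to the \emph{fixed} seminorm $\qq_0$, and it does not follow from smoothness (or continuity) of $\dermapinvdiff$ when $E$ is a general locally convex space: continuity only yields, uniformly for $X$ in a \emph{compact} set, smallness of $\qq_0(\dermapinvdiff(x,X)-X)$ in the absolute sense, or a bound of the form $\epsilon\cdot\mmm(X)$ with a possibly strictly larger seminorm $\mm\in\SEM$. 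Concretely, if the set of values of $\phi$ contains some $X\neq0$ with $\qq_0(X)=0$, your inequality would force $\dermapinvdiff(x,X)-X$ to be $\qq_0$-null for all small $x$, which nothing in the hypotheses guarantees. So, as written, the pivotal comparison is unjustified.

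The gap is repairable inside your own scheme, and the factor-$2$ slack you built into $\qq:=2\qq_0$ is precisely what the repair needs. The relevant values of $\phi$ lie in a compact set (finitely many continuous pieces on compact intervals; the finitely many node values are irrelevant for the integrals), so joint continuity of $\dermapinvdiff$ together with a standard compactness (tube) argument gives, for every $\epsilon'>0$, a neighbourhood $W$ of $0$ with $\qq_0(\dermapinvdiff(x,X)-X)\leq\epsilon'$ for all $x\in W\cap\V$ and all such $X$; a sufficiently fine mesh puts every $(\chart\cp\mu_i)(u)$ into $W$ --- for this you only need continuity of the finitely many $C^1$ curves $\EV(\phi[p])$ and of $(g,h)\mapsto g\cdot h^{-1}$, not the $C^0$-continuity of $\EV$ you invoke (so your circularity caveat is moot). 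This yields the additive comparison $\qq_0(X_i)\leq\int_{s_i}^{s_{i+1}}\qq_0(\phi(u))\,\dd u+\epsilon'(s_{i+1}-s_i)$, whence $\sum_i\qq_0(X_i)\leq 1/2+\epsilon'(r'-r)\leq1$ for $\epsilon'\leq 1/(2(r'-r))$, so \eqref{opdspoopdpod} applies and gives $(\pp\cp\chart)\big(\innt_r^t\phi\big)\leq\int_r^t\qq_0(\phi(u))\,\dd u+\epsilon'(t-r)$; letting $\epsilon'\rightarrow0$ along refinements then yields $(\pp\cp\chart)\big(\innt_r^t\phi\big)\leq\int_r^t\qq_0(\phi(u))\,\dd u\leq\int_r^t\qqq(\phi(u))\,\dd u$, which is the assertion. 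With this substitution for your relative estimate, the proof goes through.
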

Using \eqref{dfdssfdsfd}, this generalizes as follows.
\begin{lemma}
\label{podspodspods}
Assume that $G$ is locally $\mu$-convex, and let $\compacto\subseteq G$ be compact. Then, to each $\pp\in \SEM$,    
there exist $\pp\leq\mm\in \SEM$ and $O\subseteq G$ open with $\compacto\subseteq O$, such that for each $[r,r']\in \COMP$, we have
\begin{align*}
\textstyle(\pp\cp\chart)\big([\innt_r^\bullet\phi\big]^{-1}\big[\innt_r^\bullet \psi\big]\big)\leq \int_r^\bullet \mmm(\psi(s)-\phi(s))\:\dd s
\end{align*}
for all $\phi,\psi\in \DP^0([r,r'],\mg)$ with $\im[\innt_r^\bullet\phi] \subseteq O$ and $\int_r^{r'} \mmm(\psi(s)-\phi(s))\:\dd s\leq 1$. 
\end{lemma}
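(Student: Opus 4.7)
The natural approach is to exploit the commutator identity \eqref{dfdssfdsfd} in order to rewrite $\big[\innt_r^\bullet\phi\big]^{-1}\big[\innt_r^\bullet \psi\big]$ as the product integral of the single curve $\chi:=\Ad_{[\innt_r^\bullet\phi]^{-1}}(\psi-\phi)$, and then to reduce the estimate to Proposition~\ref{aaapofdpofdpofdpofd} applied to $\chi$. Since the hypothesis only bounds the integral of $\mmm(\psi-\phi)$ rather than that of $\mmm(\chi)$, the role of property~\ref{as1} will be to absorb the adjoint action into a larger seminorm that is uniform for $g$ in a suitable open neighbourhood of $\compacto$.

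Concretely, I will first apply Proposition~\ref{aaapofdpofdpofdpofd} to the given $\pp\in\SEM$ to obtain some $\pp\leq \mm_1\in\SEM$ with the stated integral-estimate property. Next I apply \ref{as1} to the compact set $\compacto^{-1}\subseteq G$ and the seminorm $\mm_1$; this yields some $\mm_1\leq\mm\in\SEM$ and an open set $U\supseteq \compacto^{-1}$ with $\mmm_1\cp\Ad_h\leq \mmm$ for every $h\in U$. Setting $O:=U^{-1}$ produces an open neighbourhood of $\compacto$ for which $\mmm_1\cp\Ad_{g^{-1}}\leq \mmm$ whenever $g\in O$. Now, whenever $\phi,\psi\in\DP^0([r,r'],\mg)$ satisfy $\im[\innt_r^\bullet\phi]\subseteq O$ and $\int\mmm(\psi(s)-\phi(s))\,\dd s\leq 1$, the element $g_s:=\innt_r^s\phi$ lies in $O$ for every $s\in[r,r']$, so
$$
\mmm_1(\chi(s)) \;=\; \mmm_1\bigl(\Ad_{g_s^{-1}}(\psi(s)-\phi(s))\bigr) \;\leq\; \mmm(\psi(s)-\phi(s)),
$$
whence $\int\mmm_1(\chi(s))\,\dd s\leq 1$. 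Since $\chi\in\DP^0([r,r'],\mg)$ by the remark preceding \eqref{dfdssfdsfd}, Proposition~\ref{aaapofdpofdpofdpofd} applied to $\chi$ with the seminorm $\mm_1$ gives $(\pp\cp\chart)\bigl(\innt_r^\bullet\chi\bigr)\leq\int_r^\bullet\mmm_1(\chi(s))\,\dd s\leq\int_r^\bullet\mmm(\psi(s)-\phi(s))\,\dd s$, and identity \eqref{dfdssfdsfd} identifies $\innt_r^\bullet\chi$ with $\big[\innt_r^\bullet\phi\big]^{-1}\big[\innt_r^\bullet\psi\big]$, which is precisely the asserted estimate.

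The only real subtlety is the bookkeeping of the two successive seminorm enlargements: the outer seminorm $\mm$ must dominate $\mmm_1\cp\Ad_{g^{-1}}$, so property~\ref{as1} has to be invoked \emph{after} the intermediate $\mm_1$ has been fixed by Proposition~\ref{aaapofdpofdpofdpofd}; and the open neighbourhood $O$ of $\compacto$ must be obtained by first enlarging $\compacto^{-1}$ (rather than $\compacto$ itself) in \ref{as1}, because the commutator identity \eqref{dfdssfdsfd} naturally produces $\Ad_{g^{-1}}$ rather than $\Ad_g$. Apart from this matching issue, all ingredients are by now routine.
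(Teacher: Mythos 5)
Your argument is correct and is essentially the paper's own proof: one first fixes the auxiliary seminorm $\qq$ (your $\mm_1$) via Proposition \ref{aaapofdpofdpofdpofd}, then applies \ref{as1} to the compact set $\compacto^{-1}$ to obtain $\mm$ and the neighbourhood $O$ with $\qqq\cp\Ad_{g^{-1}}\leq\mmm$ for $g\in O$, and concludes by applying Proposition \ref{aaapofdpofdpofdpofd} to $\chi=\Ad_{[\innt_r^\bullet\phi]^{-1}}(\psi-\phi)$ together with identity \eqref{dfdssfdsfd}. Your explicit remark that $O:=U^{-1}$ is open and that the order of the two seminorm enlargements matters merely spells out what the paper leaves implicit.
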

\begin{proof}
	For $\pp\in \SEM$ fixed, we choose $\pp\leq \qq$ as in Proposition \ref{aaapofdpofdpofdpofd}. 
Since $\compact:=\compacto^{-1}$ is compact, by \ref{as1}, there exists some $\qq\leq\mm\in \SEM$, as well as $O\subseteq G$ open with $\compacto\subseteq O$, such that
\begin{align}
\label{opgfpogfpo}
	\qqq\cp\Ad_{g^{-1}}\leq \mmm\qquad\quad\forall\:g\in O.
\end{align} 
Let now $\phi,\psi\in \DP^0([r,r'],\mg)$ be given, with $\im[\innt_r^\bullet\phi]\subseteq O$ and $\int_r^{r'}\mmm(\psi(s)-\phi(s))\:\dd s\leq 1$. We obtain from \eqref{opgfpogfpo} that
\begin{align}
\label{podspodspodsaaaa}
	\qqq(\chi)\leq \mmm(\psi-\phi)\qquad\text{holds for}\qquad \chi:= \Ad_{[\innt_r^\bullet \phi]^{-1}}(\psi-\phi),
\end{align}
hence, $\int_r^{r'}\qqq(\chi(s))\: \dd s\leq \int_r^{r'}\mmm(\psi(s)-\phi(s))\: \dd s\leq 1$. 
Then, Proposition \ref{aaapofdpofdpofdpofd} shows
\vspace{-6pt}
\begin{align*}
	\textstyle(\pp\cp\chart)\big(\innt_r^t\chi\big)\leq \int_r^t \qqq(\chi(s))\:\dd s\stackrel{\eqref{podspodspodsaaaa}}{\leq} \int_r^t \mmm(\psi(s)-\phi(s))\:\dd s\qquad\quad\forall\: t\in [r,r'].
\end{align*}
Since $\innt_r^t\chi$ equals the right hand side of \eqref{dfdssfdsfd}, we obtain
\begin{align*}
	\textstyle(\pp\cp\chart)\big([\innt_r^\bullet \phi\big]^{-1}\big[\innt_r^\bullet \psi\big]\big)\leq \int_r^\bullet \mmm(\psi(s)-\phi(s))\:\dd s,
\end{align*}
which proves the claim.
\end{proof}

\section{The Strong Trotter Property}
\label{dskdskjkjdskjdsds}
We now prove Theorem \ref{podspopods}. We start with the following observation.
\begin{lemma}
\label{poopo}
Let $G$ be locally $\mu$-convex, and assume that $L\cdot \phi\subseteq \DP^0([0,1],\mg)$ holds for $L\in \COMP$ and $\phi\colon [0,1]\rightarrow \mg$. Then, 
\begin{align}
\label{kjfdkjdfkjfdfd}
	\textstyle\Phi\colon L\times [0,1]\rightarrow G,\qquad (\tau,t)\mapsto \innt_0^t\tau\cdot \phi
\end{align}
is continuous, thus has compact image. 
\end{lemma}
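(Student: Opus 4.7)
The plan is to verify continuity of $\Phi$ at an arbitrary $(\tau_0, t_0) \in L \times [0,1]$; compactness of $\im[\Phi]$ then follows immediately from compactness of $L \times [0,1]$. Setting $\mu_\tau(t) := \innt_0^t \tau \cdot \phi$, the natural decomposition is $\Phi(\tau,t) = \mu_{\tau_0}(t) \cdot (\mu_{\tau_0}(t)^{-1}\mu_\tau(t))$. I would first record that, for each fixed $\tau \in L$, the curve $\mu_\tau$ is continuous in $t$: on each subinterval of a partition witnessing $\tau \cdot \phi \in \DP^0([0,1],\mg)$, $\mu_\tau$ restricts to a $C^1$ map (the relevant factor from that subinterval starts at $e$ at the left endpoint by definition of $\EV$), and these pieces glue continuously at the breakpoints. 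Applied to $\tau = \tau_0$, this yields that $\compacto := \im[\mu_{\tau_0}] \subseteq G$ is compact.

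The core step is to control the correction factor $\mu_{\tau_0}(t)^{-1}\mu_\tau(t)$ uniformly in $t$ as $\tau \to \tau_0$. For this I would invoke Lemma \ref{podspodspods} with the compact set $\compacto$: for any $\pp \in \SEM$ it yields $\pp \leq \mm \in \SEM$ and an open $O \supseteq \compacto$ such that, taking $\phi_1 := \tau_0 \cdot \phi$ and $\phi_2 := \tau \cdot \phi$ (both in $\DP^0([0,1],\mg)$ by hypothesis, with $\im[\innt_0^\bullet \phi_1] = \compacto \subseteq O$),
\begin{align*}
(\pp \cp \chart)\big(\mu_{\tau_0}(t)^{-1}\mu_\tau(t)\big) \leq \int_0^t \mm\big((\tau-\tau_0)\cdot \phi(s)\big)\,\dd s = |\tau - \tau_0|\cdot \textstyle\int_0^t \mm(\phi(s))\,\dd s
\end{align*}
for all $t \in [0,1]$, provided $|\tau-\tau_0|\cdot\int_0^1 \mm(\phi(s))\,\dd s \leq 1$. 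The right-hand side tends to $0$ as $\tau \to \tau_0$, so by Lemma \ref{fdfddfd} we obtain $\mu_\tau \to \mu_{\tau_0}$ uniformly on $[0,1]$. A routine neighborhood-splitting ($V'\cdot V'\subseteq V$) combined with continuity of $\mu_{\tau_0}$ at $t_0$ then delivers joint continuity of $\Phi$ at $(\tau_0, t_0)$.

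The only preliminary I need to dispose of is that $\int_0^1 \mm(\phi(s))\,\dd s < \infty$ for each continuous seminorm $\mm$. Since $L = [r,r']$ with $r < r'$, there exists $\tau_* \in L$ with $\tau_* \neq 0$, and the hypothesis yields $\tau_*\cdot \phi \in \DP^0([0,1],\mg)$, whose defining pieces are continuous on their closed subintervals. Hence $\tau_*\cdot \phi$, and therefore $\phi$, is bounded off a finite set, so the integral is finite. The only potentially delicate point of the argument is obtaining the estimate uniformly in $t$ rather than just pointwise; but Lemma \ref{podspodspods} is formulated exactly to produce such a uniform bound on a chart-neighborhood of $\compacto$, so no further idea is required.
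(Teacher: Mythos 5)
Your argument is correct, and it takes a genuinely different (and somewhat more economical) route than the paper. The paper estimates the full two-parameter increment: using \ref{pogfpogf} it writes $\Phi(\tau+h,t+h')\cdot\Phi(\tau,t)^{-1}=A^{\pm}_{h'}\cdot \mathrm{B}_{h}$, controls the $\tau$-increment $\mathrm{B}_h$ (through its conjugate $\mathrm{C}_h$) by Lemma \ref{podspodspods} applied with $\compacto=\im[\innt_0^\bullet\tau\cdot\phi]$, and controls the $t$-increment $A^{\pm}_{h'}=\innt_t^{t+h'}(\tau+h)\cdot\phi$ by Proposition \ref{aaapofdpofdpofdpofd} together with boundedness of the integrands, which is exactly what gives the required uniformity in $h$. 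You instead freeze the $t$-dependence at the base parameter: continuity of the single curve $t\mapsto\innt_0^t\tau_0\cdot\phi$ is elementary (piecewise $C^1$, the pieces glue because each factor starts at $e$ by the definition of $\EV$), and the whole joint-continuity problem is reduced to one application of Lemma \ref{podspodspods} with $\tau_0\cdot\phi$ in the role of its $\phi$ and $\psi=\tau\cdot\phi$, giving the bound $|\tau-\tau_0|\cdot\int_0^1\mmm(\phi(s))\:\dd s$ uniformly in $t$; Proposition \ref{aaapofdpofdpofdpofd} is then not invoked separately (it is of course hidden inside Lemma \ref{podspodspods}), and as a by-product you obtain the slightly stronger statement that $\innt_0^\bullet\tau\cdot\phi\rightarrow\innt_0^\bullet\tau_0\cdot\phi$ uniformly on $[0,1]$ as $\tau\rightarrow\tau_0$, which is close in spirit to what the theorem's proof needs anyway. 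Two minor points, both consistent with the paper's own conventions rather than gaps: like the paper's proof, yours uses local $\mu$-convexity even though it is suppressed in the statement of the lemma; and the passage from smallness of $(\pp\cp\chart)(\cdot)$ for the relevant seminorms to membership in an arbitrary identity neighbourhood uses that chart preimages of seminorm balls form a neighbourhood basis of $e$ -- the same implicit step that appears at the end of the paper's proof of Theorem \ref{podspopods}.
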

\begin{proof}
Let $\tau\in L$, $t\in[0,1]$, and $h,h'\in [-1,1]$ be such that $\tau+[0,1]\cdot h\subseteq L$ and $t+[0,1]\cdot h'\subseteq [0,1]$ holds. Then, 
\begin{align*}
	\textstyle\mathrm{B}_{h}:=[\innt_0^t (\tau+h)\cdot \phi]\cdot [\innt_0^{t} \tau\cdot \phi]^{-1}=[\innt_0^{t} \tau\cdot \phi]\cdot\underbrace{\textstyle[\innt_0^{t} \tau\cdot \phi]^{-1}\cdot [\innt_0^t (\tau+h)\cdot \phi]}_{\mathrm{C}_h}\cdot\: [\innt_0^{t} \tau\cdot \phi]^{-1}
\end{align*}
tends to $e$ for $h\rightarrow 0$; because $\mathrm{C}_h$ tends to $e$ for $h\rightarrow 0$, by Lemma \ref{podspodspods} applied to $\compacto=\im[\innt_0^\bullet\tau\cdot \phi]$. We obtain from \ref{pogfpogf} that
\begin{align*}
		\textstyle\Phi(\tau+h,t+h')\cdot \Phi(\tau,t)^{-1}	&\textstyle= \big[\underbrace{\textstyle\innt_t^{t+h'} (\tau+h)\cdot \phi}_{\mathrm{A}^+_{h'}}\big]\hspace{16.1pt}\cdot\:\:\mathrm{B}_{h}
		\qquad\quad\text{holds for}\qquad\quad h'>0,\\
		\textstyle\Phi(\tau+h,t+h')\cdot \Phi(\tau,t)^{-1}&\textstyle= \textstyle\big[\underbrace{\textstyle\innt_{t-|h'|}^{t} (\tau+h)\cdot \phi}_{A_{h'}^-}\big]^{-1}\cdot\:\: \mathrm{B}_{h}
				\qquad\quad\text{holds for}\qquad\quad h'<0.\\[-20pt]
	\end{align*} 
	Since the integrands are bounded, Proposition \ref{aaapofdpofdpofdpofd} shows that $\lim_{h'\rightarrow 0} A^{\pm}_{h'}=e$ converges uniformly in $h$, from which the claim is clear.
\end{proof}
Combining Lemma \ref{poopo} with Lemma \ref{podspodspods}, we obtain the following corollary.
\begin{corollary}
\label{podspodspodsdd}
Let $G$ be locally $\mu$-convex, and assume that $L\cdot \phi\subseteq \DP^0([0,1],\mg)$ holds for $L\in \COMP$ and $\phi\colon [0,1]\rightarrow \mg$. Then, to each $\pp\in \SEM$,     
there exists some $\pp\leq\mm\in \SEM$, such that
\begin{align*}
 \textstyle(\pp\cp\chart)\big([\innt_0^\bullet \tau\cdot \phi\big]^{-1}\big[\innt_0^\bullet \psi\big]\big)\leq \int_0^\bullet \mmm(\psi(s)-\tau\cdot \phi(s))\: \dd s
\end{align*} 
holds for each $\tau\in L$ and $\psi\in \DP^0([0,1],\mg)$ with $\int_0^1 \mmm(\psi(s)-\tau\cdot \phi(s))\:\dd s\leq 1$.
\end{corollary}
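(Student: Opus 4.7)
The plan is to combine Lemma~\ref{poopo} with Lemma~\ref{podspodspods}, essentially as a direct assembly. The role of Lemma~\ref{poopo} is to produce a single compact set that contains the image of $\innt_0^\bullet \tau\cdot \phi$ \emph{uniformly in} $\tau\in L$, so that Lemma~\ref{podspodspods} can then be applied once and for all.

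Concretely, I would proceed as follows. First, apply Lemma~\ref{poopo} to the given $L\in \COMP$ and $\phi$ to conclude that the map
\begin{align*}
\textstyle\Phi\colon L\times [0,1]\rightarrow G,\qquad (\tau,t)\mapsto \innt_0^t \tau\cdot \phi
\end{align*}
is continuous; in particular, $\compacto:=\im[\Phi]\subseteq G$ is compact. Note that by construction we have $\im[\innt_0^\bullet\tau\cdot \phi]\subseteq \compacto$ for \emph{every} $\tau\in L$.

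Next, for the given $\pp\in \SEM$, apply Lemma~\ref{podspodspods} to this compact set $\compacto$ to obtain $\pp\leq \mm\in \SEM$ and an open $O\subseteq G$ with $\compacto\subseteq O$, such that for each $[r,r']\in \COMP$,
\begin{align*}
\textstyle(\pp\cp\chart)\big([\innt_r^\bullet\phi_1]^{-1}[\innt_r^\bullet \phi_2]\big)\leq \int_r^\bullet \mmm(\phi_2(s)-\phi_1(s))\:\dd s
\end{align*}
holds for all $\phi_1,\phi_2\in \DP^0([r,r'],\mg)$ with $\im[\innt_r^\bullet\phi_1]\subseteq O$ and $\int \mmm(\phi_2(s)-\phi_1(s))\:\dd s\leq 1$.

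Finally, fix any $\tau\in L$ and any $\psi\in \DP^0([0,1],\mg)$ with $\int \mmm(\psi(s)-\tau\cdot \phi(s))\:\dd s\leq 1$. Setting $[r,r']=[0,1]$, $\phi_1=\tau\cdot \phi$, $\phi_2=\psi$, the inclusion $\im[\innt_0^\bullet \tau\cdot \phi]\subseteq \compacto\subseteq O$ holds by the first step, and the $L^1$-smallness hypothesis is exactly what is assumed on $\psi$. The conclusion of Lemma~\ref{podspodspods} then yields the desired inequality. There is no real obstacle here: the only substantive point is the uniform compactness produced by Lemma~\ref{poopo}, which ensures that a single pair $(\mm,O)$ provided by Lemma~\ref{podspodspods} works simultaneously for all $\tau\in L$.
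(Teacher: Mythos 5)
Your proposal is correct and follows exactly the paper's argument: compactness of $\im[\Phi]$ from Lemma \ref{poopo}, then a single application of Lemma \ref{podspodspods} to that compact set, which works uniformly in $\tau\in L$ since $\im[\innt_0^\bullet\tau\cdot\phi]\subseteq\im[\Phi]\subseteq O$ for every $\tau$. Your write-up merely spells out the details the paper leaves implicit.
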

\begin{proof}
Let $\Phi$ be defined by \eqref{kjfdkjdfkjfdfd}. Since Lemma \ref{poopo} shows that $\compacto:=\im[\Phi]$ is compact, the claim is clear from Lemma \ref{podspodspods}.
\end{proof}
We are ready for the proof of Theorem \ref{podspopods}.
\begin{proof}[Proof of Theorem \ref{podspopods}]
Let $\mu\in C^1([0,1],G)$ with $\mu(0)=e$ and $\dot\mu(0)\in \dom[\exp]$ be given. 
We fix $\ell>0$, let $X:=\dot\mu(0)$, and choose $m \geq 1$ so large that $\ell/m\leq 1$ and $\mu([0,\ell/m])\subseteq \dom[\chart]= \U$ holds. We obtain from \eqref{spodpodspodspods} that 
\begin{align}
\label{lkdflkdflkdffkld}
	\{X_\tau:=\tau\cdot X\:|\: \tau\in [0,\ell]\}\subseteq \dom[\exp]\qquad\:\:\text{holds, implying}\qquad\:\: [0,\ell]\cdot \phi_X\subseteq \DP^0([0,1],\mg),
\end{align}
so that the hypotheses of Corollary \ref{podspodspodsdd} are fulfilled for $L= [0,\ell]$ and $\phi= \phi_X$ there. 
	We proceed as follows.  
	\begingroup
\setlength{\leftmargini}{12pt}
{
\renewcommand{\theenumi}{{\rm \Alph{enumi})}} 
\renewcommand{\labelenumi}{\theenumi}
\begin{itemize}
\item
\label{as100}
For $\tau\in [0,\ell]$ and $n\geq m$, we define 
\begin{align*}
	\chi_{\tau,n}:=\Der(\mu_{\tau})|_{[0,1/n]}\qquad\quad\text{for}\qquad\quad\mu_{\tau}\colon [0,1/m]\ni t\mapsto \mu(\tau\cdot t),
\end{align*}
 and let $t_p:= p/n$ for $p=0,\dots,n$. We furthermore let 
	\begin{align*}
		\phi_{\tau,n}[p]\colon [t_p,t_{p+1}]\ni t\mapsto \chi_{\tau,n}(t-t_p)\qquad\quad\forall\: p=0,\dots, n-1,
\end{align*}			
	and define   
	 $\phi_{\tau,n}\in \DP^0([0,1],\mg)$ by putting $\chi_{\tau,n}$ n-times in a row, i.e., we let 
	 \begin{align*}
	\phi_{\tau,n}|_{[t_p,t_{p+1})}&:=\phi_{\tau,n}[p]|_{[t_p,t_{p+1})}\qquad\quad\forall\: p=0,\dots,n-2
\end{align*}
as well as $\phi_{\tau,n}|_{[t_{n-1},t_{n}]}:=\phi_{\tau,n}[n-1]$.
\item
\label{as10}
	For $\tau\in [0,\ell]$, $n\geq m$, and $0\leq p\leq n-1$, we apply \ref{subst} to $\varrho_p\colon [t_p,t_{p+1}]\ni t\mapsto t-t_p\in [0,1/n]$, and obtain
	\begin{align}
	\label{podspodsaaaaaaa}
		\textstyle\innt \phi_{\tau,n}[p]=\innt \chi_{\tau,n}\cp\varrho_p=\innt\dot\varrho_p\cdot \chi_{\tau,n}\cp\varrho_p\stackrel{\ref{subst}}{=}\innt\chi_{\tau,n}=\mu_\tau(1/n)=\mu(\tau/n).
	\end{align}		
	Then, \eqref{pofdofdopfd} provides us with 
	\begin{align}
	\label{as11}
	\textstyle \innt\phi_{\tau,n} \stackrel{\eqref{pofdofdopfd}}{=} \innt\phi_{\tau,n}[n-1]\cdot {\dots} \cdot\innt\phi_{\tau,n}[0]\stackrel{\eqref{podspodsaaaaaaa}}{=}\mu(\tau/n)^n.
	\end{align}	
\item
\label{as12}
For each $\tau\in [0,\ell]$, $n\geq m$, and $\mm\in \SEM$, we have
	\begin{align*}
		\textstyle\mmm_\infty(\phi_{\tau,n}-\phi_{X_\tau})\stackrel{\eqref{kfdkfdkjbcbcbc}}{=}\sup_{t\in[0,1/n]}\mmm(\chi_{\tau,n}(t)-\tau\cdot X)
	\end{align*}
	with $\chi_{\tau,n}(0)=\Der(\mu_{\tau})(0)=\dot\mu_{\tau}(0)=\tau\cdot \dot\mu(0)=\tau\cdot X$. 
	\vspace{3pt}	
	
	It follows\footnote{Recall that for each $\tau\in [0,\ell]$ and $n\geq m$, we have $\chi_{\tau,n}=\Der(\mu_{\tau})|_{[0,1/n]}$.} 
	from continuity of \eqref{fdpopofdpofd} in \ref{as2} that to each $\mm\in \SEM$ and $0<\varepsilon\leq 1$, there exists some $n_{\mm,\varepsilon}\geq m$ with
	\begin{align}
	\label{lkfdlkfd}
		\nonumber\mmm_\infty(\phi_{\tau,n}&-\phi_{X_\tau})< \varepsilon\qquad\qquad\hspace{26.2pt}\forall\:\tau\in [0,\ell],\:\: n\geq n_{\mm,\varepsilon}\qquad\\[2pt]
		\text{implying}\qquad\quad\textstyle\int_0^1 \mmm(\phi_{\tau,n}(s)&-\tau\cdot \phi_X(s))\:\dd s< \varepsilon\qquad\quad\forall\:\tau\in [0,\ell],\:\: n\geq n_{\mm,\varepsilon}.	
	\end{align}
\end{itemize}}
\endgroup
\noindent	 
	Let now $\pp\in \SEM$ and $0<\varepsilon\leq 1$ be fixed.  
	We choose $\pp\leq \mm\in \SEM$ as in Corollary \ref{podspodspodsdd} for $L= [0,\ell]$ and $\phi= \phi_X$ there (recall that $[0,\ell]\cdot \phi_X\subseteq \DP^0([0,1],\mg)$ holds by \eqref{lkdflkdflkdffkld}), and let $n_{\mm,\varepsilon}\geq m$ be as in \eqref{lkfdlkfd}. Since 
	\begin{align*}
		\textstyle\innt_0^t \tau\cdot \phi_X\stackrel{\ref{subst}}{=}\exp(t\cdot \tau\cdot X)\qquad\quad\forall\: t\in [0,1],\:\:\tau\in [0,\ell]
\end{align*}
holds, we obtain from \eqref{lkfdlkfd} and Corollary \ref{podspodspodsdd} that
	\vspace{-8pt}
	\begin{align*}
 \textstyle(\pp\cp\chart)\big(\exp(t\cdot \tau\cdot X)^{-1}\cdot \innt_0^t \phi_{\tau,n}\big)\leq \int_0^t \mmm(\phi_{\tau,n}(s)-\tau\cdot \phi_X(s))\: \dd s\stackrel{\eqref{lkfdlkfd}}{<} \varepsilon
\end{align*}
for each $t\in [0,1]$, $\tau\in[0,\ell]$, and $n\geq n_{\mm,\varepsilon}$. 
It is thus clear that to each open neighbourhood $V\subseteq G$ of $e$, there exists some $n_V\geq m$ with
\begin{align*}
	\textstyle\mu(\tau/n)^n\stackrel{\eqref{as11}}{=}\innt_0^1 \phi_{\tau,n}\in \exp(\tau\cdot \dot\mu(0))\cdot V\qquad\quad\: \forall\:n\geq n_V,\:\: \tau\in [0,\ell].
\end{align*}
The claim now follows from Lemma \ref{fdfddfd}.
\end{proof}
\section*{Acknowledgements}
The author thanks K.-H.\ Neeb for raising the question whether the strong Trotter property  holds in the locally $\mu$-convex context. He furthermore thanks K.-H.\ Neeb and H.\ Gl\"ockner for general remarks on a draft of the present article; as well as an anonymous JoLT referee for his helpful suggestions concerning the presentation of this article. 
This work has been supported by the Alexander von Humboldt Foundation of Germany.

\addtocontents{toc}{\protect\setcounter{tocdepth}{0}}
\appendix

\section{Appendix}

\subsection{Appendix}
\label{App1}
\begin{proof}[Proof of Implication \eqref{spodpodspodspods}]
It follows, e.g., from Lemma 11 in \cite{RGM} that for each $n\geq 1$, the constant map $\phi^n_X\colon [0,n]\ni t\mapsto X\in \mg$ is in $\DIDE_{[0,n]}$. Let $0<s\leq 1$ and $n\geq 1$ be fixed. Then, \ref{subst} applied to $\varrho\colon [0,1]\ni t\mapsto s\cdot n\cdot t \in [0,s\cdot n]$ gives
\begin{align*}
	\textstyle\innt_0^{s\cdot n} \phi^n_{X}\stackrel{\ref{subst}}{=}\innt_0^1 \phi_{s\cdot n\cdot X}=\exp(s\cdot n\cdot X)\qquad\quad\Longrightarrow\qquad\quad s\cdot n\cdot X\in \dom[\exp].
\end{align*} 
Since $\RR_{>0}=\bigcup_{n\geq 1}(0,n]$ holds, the claim follows.   
\end{proof}

\subsection{Appendix}
\label{App2}

\begin{proof}[Proof of Lemma \ref{fdfddfd}]
	The one direction is evident. For the other direction, 
	let $U\subseteq G$ be a fixed neighbourhood of $e$. To establish the proof, it suffices to show that there exists a neighbourhood $V\subseteq U$ of $e$ with
\begin{align}
\label{fdfdfdfdfd}
	\mu(t)\cdot V\subseteq U\cdot \mu(t)\qquad\quad\forall\: t\in [0,\ell].
\end{align}	
Since $\im[\mu]$ is compact, Theorem 4.9 in \cite{HAW} provides us with an identity neighbourhood $V\subseteq G$, such that $\mu(t)\cdot V\cdot \mu(t)^{-1} \subseteq U$ holds for each $t\in [0,\ell]$, which is equivalent to \eqref{fdfdfdfdfd}.    
\end{proof}

\end{document}